\documentclass[12pt,a4paper]{article}
\usepackage{graphicx}
\usepackage{amsfonts}
\usepackage{amssymb}
\usepackage{amsthm}
\usepackage[centertags]{amsmath}
\usepackage{newlfont}

\usepackage{color}
\usepackage{graphicx,color}
\usepackage{amsmath, amssymb, graphics}


\usepackage{graphicx}
\usepackage{amsfonts}
\usepackage{amssymb}
\usepackage{amsmath}
 \def\r{\mathbb{R}}
 \def\l{\mathbb{L}}
 \def\c{\mathbb{C}}
  \def\e{\mathbb{E}}
    \def\h{\mathbb{H}}
        \def\RE{\mathrm{Re}}
                \def\IM{\mathrm{Im}}

\setlength{\textwidth}{15cm}
\setlength{\oddsidemargin}{1cm}
\setlength{\evensidemargin}{1cm}
\setlength{\textheight}{19cm}
\setlength{\parskip}{.5\baselineskip}
\setlength{\parskip}{4mm}
\setlength{\headsep}{1.5cm}
\setlength{\parindent}{0pt}

\newtheorem{theorem}{Theorem}[section]

\newtheorem{proposition}[theorem]{Proposition}

\newtheorem{lemma}[theorem]{Lemma}


\title{The Lorentzian version of a theorem of Krust}
\author{Rafael L\'opez\footnote{Partially supported by the grant no. MTM2017-89677-P, MINECO/AEI/FEDER, UE}\\
 Departamento de Geometr\'{\i}a y Topolog\'{\i}a \\
 Universidad de Granada\\
 18071 Granada, Spain\\
\texttt{rcamino@ugr.es}}

\date{}
\begin{document}
\maketitle

\begin{abstract}  
In Lorentz-Minkowski space, we prove that the conjugate   surface of a maximal graph over a convex domain is also a graph.  We provide three proofs of this result that show a suitable correspondence between maximal surfaces in Lorentz-Minkowski space  and minimal surfaces in Euclidean space.  
\end{abstract}

\noindent {\it Keywords:} maximal surface, minimal surface, dual surface, conjugate surface  \\
{\it AMS Subject Classification:} 53A10,   53C42

\section{Introduction and statement of the result}

Romain Krust proved the following result about minimal graphs in Euclidean space $\e^3$: see \cite[p. 188]{dhkw} and \cite[p. 33]{ka}.

\begin{theorem}\label{t1}
 If an embedded minimal surface $X:B\rightarrow\e^3$, $B=\{w\in\c:|w|<1\}$, can be written as a graph over a convex domain in a plane, then the conjugate surface $X^*:B\rightarrow\e^3$ is a graph. 
\end{theorem}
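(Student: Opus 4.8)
The plan is to run the classical argument through the Weierstrass representation, the analytic input being the fact that a harmonic diffeomorphism onto a convex domain has an injective ``conjugate''. After a rigid motion of $\e^3$ I assume the graph lies over the plane $\{x_3=0\}$ and write $X=(X_1,X_2,X_3)=\RE\int\Phi$, where $\Phi=(\phi_1,\phi_2,\phi_3)$ is the holomorphic Weierstrass data ($\phi_1^2+\phi_2^2+\phi_3^2=0$) and $X^*=\IM\int\Phi$ is the conjugate surface, so that $X+iX^*$ is holomorphic. The first step is formal: the complex harmonic function $h:=X_1+iX_2$ decomposes as $h=F+\bar H$ with $F,H$ holomorphic on $B$, $F'=\tfrac12(\phi_1+i\phi_2)$, $H'=\tfrac12(\phi_1-i\phi_2)$, and the same rearrangement gives $h^*:=X_1^*+iX_2^*=-i(F-\bar H)$; note $h$ and $h^*$ share the Jacobian $|F'|^2-|H'|^2$. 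Hence $X$ is a graph over $\{x_3=0\}$ precisely when $h$ is injective, and $X^*$ is a graph over $\{x_3=0\}$ precisely when $F-\bar H$ is injective.

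Next I reformulate the hypothesis: since $X$ is embedded and a graph over the convex domain $\Omega\subset\{x_3=0\}$, the vertical projection $\pi\circ X=h\colon B\to\Omega$ is a diffeomorphism onto the convex set $\Omega$; in particular $J_h=|F'|^2-|H'|^2$ never vanishes and, $B$ being connected, keeps a constant sign — say $|F'|>|H'|$ on $B$ (the reverse case is identical after replacing $h$ by $\bar h$, a diffeomorphism onto the convex set $\overline{\Omega}$). The core is then the lemma: \emph{if $h=F+\bar H\colon B\to\Omega$ is a diffeomorphism onto a convex domain with $|F'|>|H'|$ everywhere, then $F-\bar H$ is injective.} Granting it, $h^*=-i(F-\bar H)$ is injective; since $J_{h^*}=J_h\neq 0$, $h^*$ is also a local diffeomorphism, hence a diffeomorphism onto the open set $h^*(B)$, so $X^*$ is a graph over $\{x_3=0\}$ (over $h^*(B)$, which need not be convex). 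This is the conclusion of Theorem~\ref{t1}.

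To prove the lemma I argue by contradiction. Suppose $F(w_0)-\overline{H(w_0)}=F(w_1)-\overline{H(w_1)}$ with $w_0\neq w_1$, and let $\gamma\colon[0,1]\to B$ be the $h$-preimage of the straight segment $\sigma(t)=(1-t)h(w_0)+t\,h(w_1)$; by convexity $\sigma$ lies in $\Omega$, so $\gamma$ is a well-defined smooth arc with $\gamma'\neq 0$. The decisive point is that along $\gamma$ the velocity $(h\circ\gamma)'=F'(\gamma)\gamma'+\overline{H'(\gamma)\gamma'}$ equals $\dot\sigma=h(w_1)-h(w_0)$, a \emph{constant} vector of direction $e^{i\theta}$, whereas $\big((F-\bar H)\circ\gamma\big)'=F'(\gamma)\gamma'-\overline{H'(\gamma)\gamma'}$. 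Put $\alpha=e^{-i\theta}F'(\gamma)\gamma'$ and $\beta=e^{-i\theta}\overline{H'(\gamma)\gamma'}$: then $\alpha+\beta=e^{-i\theta}\dot\sigma\ge 0$ and $|\alpha|>|\beta|$ (strictly, since $|F'|>|H'|$ and $\gamma'\neq 0$), and an elementary estimate — if $\alpha+\beta$ is real with $\alpha+\beta\ge 0$ and $|\alpha|>|\beta|$, then $\RE\alpha>\RE\beta$ — gives $\RE\big(e^{-i\theta}\big((F-\bar H)\circ\gamma\big)'\big)=\RE(\alpha-\beta)>0$ on $[0,1]$. Integrating, $\RE\big(e^{-i\theta}\int_\gamma d(F-\bar H)\big)>0$, but $\int_\gamma d(F-\bar H)=\big(F(w_1)-\overline{H(w_1)}\big)-\big(F(w_0)-\overline{H(w_0)}\big)=0$, a contradiction; hence $F-\bar H$ is injective.

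The Weierstrass bookkeeping and the reduction ``embedded graph over convex $\Omega$'' $\Rightarrow$ ``$h$ is a diffeomorphism onto $\Omega$'' are routine. The step that carries the content, and that I would present most carefully, is the lemma: the one genuine subtlety is realizing that pulling back a chord of $\Omega$ is exactly what makes the direction of the velocity of $h\circ\gamma$ constant, which is what converts the pointwise sign estimate into a statement about the line integral; once that is seen, only the elementary inequality $\RE\alpha>\RE\beta$ remains. This is also precisely where the two hypotheses are used: convexity of $\Omega$ (so the chord, hence $\gamma$, stays inside), and the constancy of the sign of $J_h$ (which is how the word ``graph'' enters).
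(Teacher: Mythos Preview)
Your argument is correct and is essentially the classical Krust proof via the Weierstrass representation, which the paper does not reprove for Theorem~\ref{t1} (it cites it as known) but carries out verbatim in Section~\ref{sec3} for the Lorentzian analogue. The only difference is packaging: you use the harmonic-map decomposition $h=F+\bar H$ and an abstract inequality $\RE\alpha>\RE\beta$, while the paper works with the functions $\sigma,\tau$ tied to the Weierstrass data $g,\eta$ and computes the inner product $\langle p_2-p_1,\,i(q_2-q_1)\rangle$ directly; the pull-back of a chord of the convex domain and the resulting sign estimate are identical in both.
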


In this paper, we extend this result for maximal surfaces in Lorentz-Minkowski   space $\l^3$.  An   immersion of a surface in $\l^3$ is called spacelike if the induced metric on the surface is   Riemannian.  A maximal surface in $\l^3$ is a   spacelike surface   with  zero mean curvature  at every point. From the variational viewpoint, maximal surfaces locally  represent a maximum for the area integral. The Lorentzian version of the Krust's theorem   is formally the same, except that we need to precise the causal character of the plane with respect to which the surface is a graph.

\begin{theorem} \label{t2}
If an embedded maximal surface $X:B\rightarrow\l^3$, $B=\{w\in\c:|w|<1\}$, can be written as a graph over a convex domain in a spacelike plane, then the conjugate surface $X^*:B\rightarrow\l^3$ is a graph. 
\end{theorem}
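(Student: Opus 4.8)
The plan is to anchor everything on the Weierstrass representation and to run the argument twice, in two different ways: first by reducing Theorem~\ref{t2} to the Euclidean Theorem~\ref{t1} via an explicit correspondence between maximal and minimal surfaces, and second by transcribing Krust's original argument directly. Write the maximal immersion as $X=\RE\int(\phi_1,\phi_2,\phi_3)$ with $(\phi_1,\phi_2,\phi_3)=\bigl(\tfrac12(1+g^2)\omega,\ \tfrac{i}{2}(1-g^2)\omega,\ g\omega\bigr)$, where $g$ is the stereographically projected Gauss map and $\omega$ a holomorphic $1$-form on $B$; the induced metric is a positive multiple of $(1-|g|^2)^2|\omega|^2$, so spacelikeness already forces $|g|\neq 1$, and since $B$ is simply connected we may assume, after composing with an ambient isometry of $\l^3$, that $|g|<1$ on all of $B$. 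This is the single point at which the Lorentzian version is actually \emph{easier} than the Euclidean one: in Theorem~\ref{t1} the hypothesis ``graph'' is needed precisely to keep the Gauss map off the equator, whereas in $\l^3$ this is automatic from the causal character of the surface. Put $z=X_1+iX_2$ and $z^*=X_1^*+iX_2^*$, where $X^*=\RE\int(i\phi_1,i\phi_2,i\phi_3)$ is the (again maximal, isometric) conjugate surface; a short computation gives $dz=\tfrac12(g^2\omega+\bar\omega)$ and $dz^*=\tfrac{i}{2}(g^2\omega-\bar\omega)$. After an isometry the spacelike plane is $x_3=0$, the hypothesis is that $z\colon B\to\Omega$ is a diffeomorphism onto a convex domain $\Omega$, and it suffices to prove that $z^*$ is injective, for then $X^*=(\RE z^*,\IM z^*,X_3^*)$ is a graph over $x_3=0$.

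\emph{First approach.} The substitution $\phi_3\mapsto i\phi_3$ turns the $\l^3$ isotropy relation $\phi_1^2+\phi_2^2-\phi_3^2=0$ into the $\e^3$ relation $\phi_1^2+\phi_2^2+(i\phi_3)^2=0$; since $B$ is simply connected there is no period obstruction, so $\widetilde X:=\RE\int(\phi_1,\phi_2,i\phi_3)=(X_1,X_2,-\IM\int\phi_3)$ is a well-defined minimal immersion in $\e^3$, with Weierstrass data $(ig,\omega)$, hence metric a positive multiple of $(1+|g|^2)^2|\omega|^2$, and with the same horizontal projection as $X$. Thus $\widetilde X$ is embedded and is a graph over the very same convex domain $\Omega$, so Theorem~\ref{t1} yields that its conjugate $\widetilde X^*=\RE\int(i\phi_1,i\phi_2,-\phi_3)=(X_1^*,X_2^*,-X_3)$ is a graph over a plane; since the horizontal projection of $\widetilde X^*$ coincides with that of $X^*$, this says precisely that $z^*$ is injective, and $X^*$ is a graph over $x_3=0$.

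\emph{Second approach.} Fix $w_1\neq w_2$ in $B$. Convexity of $\Omega$ lets us pull back the segment $[z(w_1),z(w_2)]\subset\Omega$ by $z^{-1}$ to a regular curve $\gamma\colon[0,1]\to B$ with $z(\gamma(t))=(1-t)z(w_1)+tz(w_2)$, so that $\frac{d}{dt}z(\gamma(t))\equiv c:=z(w_2)-z(w_1)\neq 0$. Evaluating the forms on the velocity and abbreviating $P(t)=g(\gamma(t))^2\,\omega(\gamma'(t))$, $Q(t)=\overline{\omega(\gamma'(t))}$, the constant-velocity identity reads $P+Q\equiv 2c$, while $|g|<1$ (together with $\omega\neq 0$ and $\gamma'\neq 0$, both forced by $c\neq 0$) gives $|P(t)|<|Q(t)|$ for every $t$. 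Integrating $dz^*$ along $\gamma$ gives $z^*(w_2)-z^*(w_1)=\tfrac{i}{2}\int_0^1(P-Q)\,dt=i\bigl(\int_0^1 P\,dt-c\bigr)$. From $|P|<|Q|=|2c-P|$ one gets $\RE(\bar c\,P(t))<|c|^2$ for all $t$, hence $\RE\bigl(\bar c\int_0^1 P\,dt\bigr)<|c|^2$ by strictness, so $\int_0^1 P\,dt\neq c$ and $z^*(w_1)\neq z^*(w_2)$.

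The hard part, in either route, is small and well-localized. In the first approach it is checking that the substitution $\phi_3\mapsto i\phi_3$ respects exactly what must be respected: that $\widetilde X$ is a regular, embedded minimal surface that is a graph over the \emph{same} convex domain, and that the operation intertwines ``passing to the conjugate surface'' with itself at the level of horizontal projections; all of this is quick but genuinely needs to be verified. In the second approach the crux is the closing estimate --- drawing the strict inequality $\RE(\bar c\,P)<|c|^2$ out of the constant-velocity identity $P+Q\equiv 2c$ (which is where convexity of $\Omega$ enters) and the pointwise bound $|g|<1$, and confirming that strictness persists under integration. It is worth stressing that convexity of $\Omega$ cannot be dispensed with: the same computation carried out along the straight segment $[w_1,w_2]\subset B$ does not close up, so the convexity (indeed disk structure) of the parameter domain alone is insufficient.
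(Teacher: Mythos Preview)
Your proposal is correct. Your first approach is the paper's third proof (Section~\ref{sec5}, via duality): the substitution $\phi_3\mapsto i\phi_3$ is exactly the dual map $\sharp$, and your direct verification that the horizontal projections are preserved under the correspondence and under conjugation replaces the paper's more abstract route through Proposition~\ref{pr2} and Lemma~\ref{l1}. Your second approach is the paper's first proof (Section~\ref{sec3}): there the argument is closed by showing $\langle p_2-p_1,\,i(q_2-q_1)\rangle_0>0$ via an integrand proportional to $|g|^{-2}-|g|^{2}$, while you reach the same conclusion through the strict inequality $\RE(\bar c\,P)<|c|^2$; these are algebraically equivalent manipulations of the same identity. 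The only convention discrepancy is that the paper normalizes to $|g|>1$ whereas you take $|g|<1$, which is immaterial. The paper also offers a third, geometric proof (Section~\ref{sec4}, after Mart\'{\i}n) that you do not reproduce.
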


Although this result may be expected, there are differences in the theory of minimal surfaces in $\e^3$ and maximal surfaces in $\l^3$. A  clear example   is the Bernstein theorem. In $\l^3$, the only maximal entire graphs  are  spacelike planes (\cite{cy}), which is the Lorentzian version of the classical Bernstein theorem in Euclidean space. However, the result holds for arbitrary dimensions, that is, for maximal hypersurfaces in $\l^{n}$, in contrast with the Euclidean version, where the Bernstein theorem  only is valid for minimal hypersurfaces of $\e^n$ with $n\leq 7$ (\cite{bo}). 

The goal of this paper is to provide three different approaches of the proof of Theorem \ref{t2}. A first proof follows the same steps as in Euclidean space by means of the Weierstrass representation of a maximal surface (Section \ref{sec3}). The second proof makes use of a clever geometric  idea due to F. Mart\'{\i}n in \cite{ma} (Section \ref{sec4}). Finally, the third proof uses a duality correspondence between maximal surfaces of $\l^3$ and minimal surface of $\e^3$  (Section \ref{sec5}). 

\section{Preliminaries}\label{sec2}

In this section we fix the terminology and notation. The Lorentz-Minkowski $3$-dimensional space $\l^3$ is   the vector space $\r^3$ with canonical coordinates  $(x_1,x_2,x_3)$ and endowed with the metric $\langle,\rangle=dx_1^2+dx_2^2-dx_3^2$. A vector $v\in\r^3$ is spacelike, timelike or lightlike if $\langle v,v\rangle$ is positive, negative or zero, respectively.  We refer  the reader to \cite{lo5} for general definitions of $\l^3$. In order to distinguish   the Euclidean space from the Lorentzian space, we denote the Euclidean space by $\e^3$, that is,  $\r^3$ with the Euclidean metric $\langle,\rangle_0=dx_1^2+dx_2^2+dx_3^2$. 

The spacelike condition for a surface in $\l^3$ is a strong property. For example, any spacelike (connected) surface $X:M\rightarrow\l^3$ is orientable. This is due to the fact the two timelike vectors are not orthogonal. Indeed, the orthogonal subspace to each tangent plane $T_pM$ is timelike. Thus, if $N$ is a local orientation on $M$, $N(p)$ is a unit timelike vector and if $e_3=(0,0,1)$,   then $|\langle N(p),e_3\rangle|\geq 1$ for every $p\in M$. Hence,   by connectedness, it is possible to choose a global orientation $N$ on $M$ such that $\langle N,e_3\rangle\leq -1$ globally in $M$ (or $\langle N,e_3\rangle\geq 1$ in $M$), which proves the orientability of $M$. In this paper, we will choose the orientation on a spacelike surface such that  $\langle N,e_3\rangle\leq -1$ on $M$.

Other consequence of the spacelike condition is when we consider spacelike graphs in $\l^3$.   It is known that any surface $X:M\rightarrow \r^3$ (without any induced metric) is locally the graph over one of the three coordinate planes of $\r^3$. However, if $X:M\rightarrow \l^3$  is a  spacelike immersion,   then  we can assure that the surface is  locally a graph on the $x_1x_2$-plane. Indeed, consider the orthogonal projection onto the $x_1x_2$-plane, which we identify with $\r^2=\c$: 
$$\pi:\r^3\rightarrow\r^2,\quad  \pi(x_1,x_2,x_3)=(x_1,x_2).$$ Define the map   $\tilde{x}=\pi\circ X:M \rightarrow\r^2$. The differential $(d\tilde{x})_p$ at $p$ is $(d\tilde{x})_p(v)=(v_1,v_2)$, $v=(v_1,v_2,v_3)\in T_pM$. Then  
\begin{equation}\label{pr}
|(d\tilde{x})_p(v)|^2= v_1^2+v_2^2\geq v_1^2+v_2^2-v_3^2 =\langle v,v\rangle>0
\end{equation}
for any nonzero tangent vector $v$, and consequently,   $(d\tilde{x})_p$ is injective. This   proves that $\tilde{x}$ is a local diffeomorphism. 

In the classical theory of minimal surfaces in Euclidean space, it is an issue to determine when a minimal surface is a minimal graph. One of the first results was obtained by   Rad\'o proving that  if $X:B\rightarrow\e^3$  is a compact minimal disk and $X(\partial B)$   can be  orthogonally projected one-to-one onto a planar convex closed curve $\Gamma$, then $X(B)$ is a minimal graph over the convex planar domain bounded by $\Gamma$ (\cite{ra}). For spacelike surfaces, without assuming any assumption on its mean curvature, the result   goes beyond.

\begin{proposition} 
Let $\Gamma\subset\r^3$ be a simple closed curve and let $X:M \rightarrow\l^3$ be a compact spacelike surface such that $X:\partial M\rightarrow \Gamma$ is a diffeomorphism. If   there exists a spacelike plane $P$ such that the orthogonal projection   of $\Gamma$ on $P$ is a simple closed curve, then $X(M)$ is  a spacelike graph on some domain of $P$.  In particular, a compact spacelike surface      spanning a   simple closed planar curve is a graph.
\end{proposition}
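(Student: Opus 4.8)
The plan is to reduce, by applying a Lorentzian isometry, to the case that $P$ is the $x_1x_2$-plane, and to work with the orthogonal projection $\pi$ of Section~\ref{sec2}; put $\tilde X=\pi\circ X\colon M\to\r^2$. The estimate \eqref{pr} shows that $(d\tilde X)_p$ is an isomorphism for every $p\in M$, so $\tilde X$ is a local diffeomorphism; in particular $M$ has no closed component (the $\tilde X$-image of such a component would be open, closed and bounded in $\r^2$), and since $\partial M$ (diffeomorphic to $\Gamma$) is a single circle, $M$, and hence $\mathrm{int}(M)$, is connected. Because $X$ is spacelike the tangent lines of $\Gamma$ are spacelike, hence never vertical, so $\pi|_\Gamma$ is an injective immersion, i.e. a diffeomorphism onto the simple closed curve $\tilde\Gamma:=\pi(\Gamma)$; composing with $X|_{\partial M}\colon\partial M\to\Gamma$ shows $\tilde X|_{\partial M}$ is a diffeomorphism onto $\tilde\Gamma$. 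By the Jordan curve theorem $\tilde\Gamma$ bounds a bounded domain $\Omega$, and $\r^2\setminus\tilde\Gamma$ is $\Omega$ together with one unbounded component.

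The goal is then to show that $\tilde X$ restricts to a diffeomorphism $\mathrm{int}(M)\to\Omega$. Once this is known, each point of $\Omega$ is the $\pi$-image of exactly one point of $X(\mathrm{int}(M))$, each point of $\tilde\Gamma$ the $\pi$-image of exactly one point of $\Gamma=X(\partial M)$, and $\Omega\cap\tilde\Gamma=\emptyset$; so $\pi$ is injective on $X(M)$ and $X(M)$ is a graph over $\overline\Omega$, with $X(\mathrm{int}(M))$ a graph over the domain $\Omega$, which is the assertion.

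To prove the claim I would first identify the image $U:=\tilde X(\mathrm{int}(M))$ with $\Omega$. It is open (as $\tilde X$ is an open map) and bounded (as $\tilde X(M)$ is compact), and $\partial U\subset\tilde X(\partial M)=\tilde\Gamma$: if $q\in\partial U$, write $q=\lim\tilde X(p_n)$ with $p_n\in\mathrm{int}(M)$ and pass to a subsequence $p_n\to p\in M$; then $p\notin\mathrm{int}(M)$, since otherwise $q=\tilde X(p)$ would be interior to $U$, so $p\in\partial M$ and $q\in\tilde\Gamma$. A nonempty open bounded set whose boundary lies on the Jordan curve $\tilde\Gamma$ must equal $\Omega$, hence $U=\Omega$. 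Next, $\tilde X\colon\mathrm{int}(M)\to\Omega$ is proper: for $K\subset\Omega$ compact, $\tilde X^{-1}(K)$ is closed in the compact space $M$, hence compact, and disjoint from $\partial M$ (which maps into $\tilde\Gamma$), so it is a compact subset of $\mathrm{int}(M)$. A proper local diffeomorphism onto the connected, simply connected domain $\Omega$ is a finite covering map, and a connected covering of a simply connected space is a homeomorphism; being also a local diffeomorphism, $\tilde X\colon\mathrm{int}(M)\to\Omega$ is then a diffeomorphism. (Alternatively one can argue by degree: on the connected oriented surface $\mathrm{int}(M)$ the sign of $\det(d\tilde X)$ is constant, so the number of preimages over a point of $\Omega$ equals $|\deg\tilde X|$, which equals the absolute winding number of $\tilde\Gamma=\tilde X(\partial M)$ about that point, namely $1$.) Finally, for the last statement, if $\Gamma$ lies in a plane $Q$ then $Q$ is spacelike, because the tangent vectors of $\Gamma$ are spacelike and a timelike or lightlike plane contains no closed spacelike curve (on such a plane a spacelike tangent direction forces one affine coordinate to be strictly monotone along the curve); taking $P=Q$, the projection of $\Gamma$ onto $P$ is $\Gamma$ itself and the first part applies.

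The main obstacle is the global injectivity of $\tilde X$ on $\mathrm{int}(M)$, that is, promoting the local statement ``$\tilde X$ is a local diffeomorphism'' together with the boundary behaviour to a genuine global diffeomorphism onto the Jordan domain $\Omega$. The reduction to a coordinate plane, the local graph property via \eqref{pr}, and the identification $U=\Omega$ are routine once the Jordan curve theorem is at hand; the only delicate point among these is excluding that $\tilde X(\mathrm{int}(M))$ ``spills over'' $\tilde\Gamma$, which is taken care of by the inclusion $\partial U\subset\tilde\Gamma$.
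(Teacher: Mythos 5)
Your overall route is essentially the one the paper takes: spacelikeness makes $\tilde{x}=\pi\circ X$ a local diffeomorphism via \eqref{pr}, the boundary of $\tilde{x}(\mathrm{int}(M))$ is shown to lie on $\Gamma'=\pi(\Gamma)$, that image is then identified with the Jordan domain bounded by $\Gamma'$, and a covering-space argument over a simply connected domain upgrades the local diffeomorphism to a global one. Your properness argument on the interior (plus the degree alternative) is a clean variant of the paper's step, where compactness of $M$ is used to get a covering $\tilde{x}:M\to\overline{\Omega}$ of the closed domain; your connectedness remark and the argument that a plane containing a closed spacelike curve must be spacelike are fine.

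The one step that does not hold as written is the assertion that $\pi|_\Gamma$ is an \emph{injective} immersion ``because the tangent lines of $\Gamma$ are never vertical.'' Non-verticality only gives an immersion, hence a covering map of $\partial M\cong S^1$ onto the simple closed curve $\Gamma'$; it does not rule out a covering of degree $k\geq 2$, since the hypothesis (read set-theoretically) only says that the image $\pi(\Gamma)$ is a simple closed curve. And you genuinely use this injectivity: it is what makes all of $X(M)$, and not merely $X(\mathrm{int}(M))$, a graph over the closed domain. The gap is not hard to close: if the boundary covering had degree $k\geq 2$, view $\Gamma$ as $k$ height functions over $\Gamma'$; the difference of two consecutive sheets is continuous, never zero (by simplicity of $\Gamma$), yet its sum over a fiber telescopes to zero, contradicting the intermediate value theorem. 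Alternatively, match the boundary degree with the interior degree $1$ you already computed — which is in effect what the paper does by running the covering argument for $\tilde{x}:M\to\overline{\Omega}$ on the whole compact surface at once instead of only on the interior. With this point repaired, your proof is correct.
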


\begin{proof}
After a rigid motion, we can assume  that $P$ is the   plane of equation $ x_3=0$. By \eqref{pr}, we know that  
$\tilde{x}=\pi\circ X:\mbox{int}(M )\rightarrow\r^2$ is a local diffeomorphism and, therefore, it is an open map. Let
$\Omega=\tilde{x}(\mbox{int}(M ))\subset\r^2$, which is an open subset in $\r^2$,
and let $\Omega'$ be the planar domain bounded by the plane simple closed curve
$\Gamma'=\pi(\Gamma)=\tilde{x}(\partial M )$.

\begin{enumerate}
\item Claim: $\partial \tilde{x}(M )\subset\Gamma'$.

Since $M $ is compact, for any $q\in\partial \tilde{x}(M )$ there exists
$p\in M $ such that $\tilde{x}(p)=q$. We show that $p\in\partial M $. On the contrary,
 $p\in\mbox{int}(M )$ and there is an open neighborhood $U_p$ of $p$ in $\mbox{int}(M )$ and
an open neighborhood $V_q$ of $q$ in $\Omega$ such that $\tilde{x}:U_{p}\rightarrow V_{q}$
is a diffeomorphism. This implies that $q\in\Omega$, contradicting that $q$ is a boundary point of $\tilde{x}(M )$.
\item Claim:  $\Omega=\Omega'$. 

If there exists a point in $\Omega$ which is not in $\Omega'$ and since $\Omega$ is bounded, there are points in $\partial\Omega$ outside $\Omega'$, which is impossible.
Analogously, if there is a point in $\Omega'$ which is not in $\Omega$, there are points in $\partial\Omega$ inside $\Omega'$, which is not possible again.
\end{enumerate}
  As a consequence, $\tilde{x}:M \rightarrow\overline{\Omega}$ is a local diffeomorphism, and the compactness of $M $ implies that $\tilde{x}$ is a covering map. Since $\overline{\Omega}$ is simply connected, the map $\tilde{x}$ must be a global diffeomorphism.

Hence, letting
$F=\tilde{x}^{-1}$ we conclude that $x\circ F$ is the graph determined by the function $f=x_3\circ F$.

For the last statement, assume that the boundary $\Gamma$ of the surface is planar. Because any curve contained in a spacelike surface is spacelike, the curve $\Gamma$ is spacelike. Since   $\Gamma$ is a closed curve,   the plane   containing $\Gamma$ must be spacelike (\cite{lo5}). This proves that $\Gamma$ is contained in a spacelike plane, and the result applies. 
\qed\end{proof}

Minimal surfaces in $\e^3$ and maximal surfaces in $\l^3$ share some properties.  For example, maximal surfaces admit   a Weierstrass representation as it occurs for minimal surfaces and  that we now explain (\cite{ko}). Let $M$ be an orientable surface and consider isothermal parameters on $M$ which induce a conformal structure on $M$. Let $X:M\rightarrow\l^3$ be a spacelike conformal immersion such that the mean curvature vanishes at every point of $M$, that is, $X$ is a maximal surface.     If  $N$ is the Gauss map of $X$, and because $\langle N,e_3\rangle\leq -1$,  then $N$ is a map  
$$N:M\rightarrow\h^2:=\{x=(x_1,x_2,x_3)\in\r^3: \langle x,x\rangle=-1,x_3\geq 1\}.$$
 If $\overline{\c}=\c\cup\{\infty\}$ is the extended complex plane and $\mu:\h^2\rightarrow \overline{\c}\setminus\{|z|=1\}$  is the  stereographic projection from the North pole $(0,0,1)\in\h^2$, the Gauss map $N$ is viewed as a meromorphic function $g:M\rightarrow\overline{\c}$, with  $g=\mu \circ N$.

Define a $\c^3$-valued holomorphic $1$-form on $M$ by $\Psi=2\,dX=2\,X_z\, dz$, where $z$ is a complex coordinate of $M$. 
There is a holomorphic $1$-form $\eta$ on $M$  such that the $1$-forms
$$\Psi_1=\frac{1}{2}(1+g^2)\eta,\quad\Psi_2=\frac{i}{2}(1-g^2)\eta,\quad\Psi_3=-g\eta$$
are holomorphic on $M$ without common zeroes and they   have no real periods. If $z_0\in M$ is a base point, then  the immersion $X$ is determined by  
\begin{equation}\label{wr}
X(z)=X(z_0)+\RE\int_{z_0}^z\Psi,\quad \Psi=(\Psi_1,\Psi_2,\Psi_3).
\end{equation}
We say that  $(M,g,\eta)$ (or $(M,\Psi)$) are the Weierstrass data of $X$ and \eqref{wr} is the Weierstrass representation of $X$. The complex curve $\Psi$ associate to $X$ is isotropic in the sense that $\langle\Psi,\Psi\rangle=\Psi_1^2+\Psi_2^2-\Psi_3^2=0$, where $\langle,\rangle$  denotes the complexification   of the Lorentzian metric. 

Suppose that $X$ is defined on a simply connected  domain $\Omega$ of $\r^2=\c$. The {\it conjugate surface} $X^*:\Omega\rightarrow\l^3$ is defined on $\Omega$ as solution of the Cauchy-Riemann equations
$$X_u^*=-X_v,\quad X_v^*=X_u$$
in $\Omega$, where $z=u+iv\in\Omega$, $i=\sqrt{-1}$. Then $X^*:\Omega\rightarrow\l^3$ is a conformal spacelike immersion which   is also a maximal surface.  The map $X^*$ is nothing that the harmonic conjugate of $X$, that is, the map $\tilde{X}=X+iX^*:\Omega\rightarrow\c^3$ is   holomorphic   and the complex derivative of $\tilde{X}$ is $\tilde{X}_z=X_u+iX_u^*=X_u-iX_v=\Psi/2$. In particular, 
up to a constant, $X^*(z)=\IM\int^z\Psi dz$.  Moreover 
\begin{equation}\label{dx}
dX^*=-dX\circ J,
\end{equation}
 where $J$ is the rotation of 90 degrees on all tangent planes induced by the orientation on $\Omega$. More precisely, if $\{\partial_u,\partial_v\}$ is the oriented basis in the tangent plane determined by the conformal parameter $z$, then 
 $$J(\partial_u)=\partial_v,\quad J(\partial_v)=-\partial_u.$$
The conjugate surface $X^*$ is isometric to the initial surface $X$ and both surfaces  have the same Gauss map at corresponding points. The isotropic curve of $X^*$ is
\begin{equation}\label{pp}
\Psi^*=2\,dX^*=2\,X^*_z dz=2(X^*_u-iX^*_v)dz=-2i(X_u-iX_v)dz=-i\,\Psi,
\end{equation}
and the Weierstrass data of $X^*$ is $(\Omega,g,-i\,\eta)$.

\section{First proof: using the Weierstrass representation}\label{sec3}
 
 In this section we follow the same steps that the original idea of Krust by means of the  Weierstrass representation formula for a maximal surface. Firstly, we change the expression \eqref{wr} by defining a meromorphic function $h$ such that $dh=g\eta$. Then \eqref{wr} is now
 \begin{equation}\label{wr2}
 X(w)=X(w_0)+\RE\int_{w_0}^w \left(\frac12\left(\frac{1}{g}+g\right),\frac{i}{2}\left(\frac{1}{g}-g\right),-1\right)dh.
 \end{equation}
 After a rigid motion of $\l^3$, we can assume   that $X$   is graph on a convex domain $D$ of the plane of equation $x_3=0$, which we identify with $\r^2=\c$.  By our choice of orientation for a spacelike surface explained in Section \ref{sec2},   the Gauss map $N$ points upwards in $B$.  Because the inverse of the stereographic projection is 
 $$\mu^{-1}(z)=\left(\frac{-2\,\RE z}{|z|^2-1},\frac{-2\,\IM z}{|z|^2-1},\frac{|z|^2+1}{|z|^2-1}\right)$$
 and $N=\mu^{-1}\circ g$, we deduce that   $|g(z)|>1$ in $B$. 
 
 Without loss of generality, we assume  $w_0=0$ and $X(w_0)=0$ in \eqref{wr2}.  Let us introduce the following notation:
 $$\sigma(w)=-\int_0^w\frac{g}{2}dh,\quad \tau(w)=\int_0^w\frac{1}{2g}dh.$$
 Using $\sigma$ and $\tau$ together \eqref{wr2}, the orthogonal projection of $X(B)$   is 
 \begin{eqnarray}\label{p1}
 \pi\circ X(w)&=&\RE\int_{0}^w \left(\frac12\left(\frac{1}{g}+g\right),\frac{i}{2}\left(\frac{1}{g}-g\right)\right)dh\nonumber\\
 &=&\RE\left(\tau-\sigma+i(t\tau+\sigma)\right)=\bar{\tau}-\sigma.
 \end{eqnarray}
 Analogously, and because $X^*(w)=\IM\int_0^{w}\Psi$, we have
 \begin{eqnarray}\label{p11}
 \pi\circ X^*(w)&=&\IM\int_{0}^w \left(\frac12\left(\frac{1}{g}+g\right),\frac{i}{2}\left(\frac{1}{g}-g\right)\right)dh\nonumber\\
 &=&\IM\left(\tau-\sigma+i(\tau+\sigma)\right)=i(\bar{\tau}+\sigma).
 \end{eqnarray}

 {\it Claim:} If $w_1\not=w_2$, $w_1, w_2\in B$, then $\pi\circ X^*(w_1)\not=\pi\circ X^*(w_2)$.
 
 Let $p_i=\pi\circ X(w_i)$ and $q_i=\pi\circ X^*(w_i)$, $i=1,2$. Since $X(B)$ is a graph on the convex domain $D=\pi\circ X(B)$, there is a segment in $D$ connecting $p_1$ with $p_2$. We parametrize this segment by 
 $$\gamma(t):[0,1]\rightarrow D,\quad \gamma(t)=(1-t)p_1+t p_2$$
  and let 
 $\beta:[0,1]\rightarrow B$ be a   curve in the unit disk $B$ such that $X\circ\beta=\pi_{|X(B)}^{-1}\circ  \gamma$.  Then
 $$p_2-p_1=\gamma(1)-\gamma(0)=\gamma'(t).$$
 Using \eqref{p1},
 \begin{eqnarray}\label{s1}
 p_2-p_1&=&d(\pi\circ X)\beta'(t)=d(\bar{\tau}-\sigma)\beta'(t)\nonumber\\
 &=&\left(\overline{\frac{h'(w)}{2g(w)}}-\frac{g(w)h'(w)}{2}\right){\Bigg|}_{w=\beta(t)}\beta'(t)
 \end{eqnarray}
Similarly, taking into account \eqref{p11},  we obtain
 \begin{eqnarray}\label{s11}
 q_2-q_1&=&\int_\gamma d(\pi\circ X^*)\beta'(t)=\int_\gamma i\, d(\bar{\tau}+\sigma)\beta'(t)\nonumber\\
 &=& \int_\gamma i \left(\overline{\frac{h'(w)}{2g(w)}}+ \frac{g(w)h'(w)}{2}\right){{\Bigg |}_{w=\beta(t)}}\beta'(t)
 \end{eqnarray}
  We multiply  $p_2-p_1$ and $i(q_2-q_1)$ with the Euclidean scalar product $\langle,\rangle_0$ of $\r^2$. Recall that in complex notation,  $\langle v_1,v_2\rangle_0=\RE(v_1\overline{v_2})$,   $v_1,v_2\in\r^2$.  Thus \eqref{s1} and \eqref{s11} imply
\begin{eqnarray*}
\langle p_2-p_1,i(q_2-q_1)\rangle&=&\RE((p_2-p_1)\overline{(i(q_2-q_1))})\\
&=&-\int_0^1\RE\left(\overline{\frac{h'(w)}{2g(w)}}-\frac{g(w)h'(w)}{2}\right)\left( \frac{h'(w)}{2g(w)}+\overline{\frac{g(w)h'(w)}{2}}\right){\Bigg|}_{w=\beta(t)}|\beta'(t)|^2\\
&=&-\int_0^1  \frac{|\beta'(t)|^2}{4}\left( \frac{1}{|g|^2}-|g|^2\right){{\Bigg |}_{w=\beta(t)}}>0.
\end{eqnarray*}
 Because $p_1\not=p_2$, we conclude $q_1\not=q_2$. This proves that the orthogonal projection $\pi:X^*(B)\rightarrow\r^2$  is injective, hence $X^*(B)$ is a graph.

 \section{Second proof: a geometric approach}\label{sec4}
 
In this proof, we begin as in the above section and we employ the notation that appeared there. Without loss of generality, we assume that $X(B)$ is a graph on the $x_1x_2$-plane of $\r^3$, which we identify with $\r^2$ again.    

{\it Claim:} If $w_1\not=w_2$, $w_1, w_2\in B$, then $\pi\circ X^*(w_1)\not=\pi\circ X^*(w_2)$.
 
Recall that $\gamma$ is the segment in $D$ that connects $p_1$ with $p_2$.  Let $\Pi$ be the plane containing  $\gamma$ and orthogonal to $\r^2$. Let $\vec{a}=(a_1,a_2,0)\in\l^3$ be a unit vector orthogonal to $\Pi$. Define the curve $\alpha:[0,1]\rightarrow \l^3$ by $\alpha(t)=X(\beta(t))$, which connects $X(w_1)$ with $X(w_2)$. Denote $\alpha^*=X^*\circ\beta$ the conjugate curve of $\alpha$ connecting $X^*(w_1)$ with $X^*(w_2)$. 
 
If we write $\beta'(t)=u(t)\partial_u(t)+v(t)\partial_v(t)$, where $\{\partial_u,\partial_v\}$ is an oriented basis at each tangent plane of the $z$-plane, by \eqref{dx}, we obtain 
\begin{eqnarray*}
{\alpha^*}{'}(t)&=&(dX^*)(\beta'(t))=-(dX)(-v(t)\partial_u(t)+u(t)\partial_v(t)).\\
&=&v(t)X_u(t)-u(t)X_v(t)
\end{eqnarray*}

 On the other hand, if  $N=\lambda (X_u\times X_v)$ is the Gauss map of $X$, $\lambda>0$,  where $\times$ is the Lorentzian vector product in $\l^3$, and because $X$ is conformal, we have $N\times X_u=-X_v$ and $N\times X_v=X_u$. then
$$N(\alpha(t))\times \alpha'(t)=-u(t)X_v(t)+v(t)X_u(t)={\alpha^*}{'}(t).$$
Since $\alpha$ is   contained in the plane $\Pi$, the vector $\nu(t)=N(\alpha(t))\times \alpha'(t)$ does not belong to the plane $\Pi$, hence $\nu(t)$ satisfies 
$\langle\nu(t),\vec{a}\rangle\not=0$ for every $t\in [0,1]$.  Since $\vec{a}$ is a horizontal vector, $\langle\nu(t),\vec{a}\rangle=\langle\nu(t),\vec{a}\rangle_0$. Without loss of generality, we assume  $\langle\nu(t),\vec{a}\rangle>0$ in $[0,1]$. In particular, $\langle\pi\circ\nu(t),\vec{a}\rangle>0$ in $[0,1]$ because $\nu(t)\not\in\Pi$ for every $t\in [0,1]$. Then 
\begin{eqnarray*}\langle q_2-q_1,\vec{a}\rangle&=&\langle\int_0^1 {(\pi\circ\alpha^*)}{'}(t),\vec{a}\rangle\, dt=
 \langle\int_0^1\pi\circ\nu(t),\vec{a}\rangle\, dt\\
&=&\int_0^1\langle\pi\circ\nu(t),\vec{a}\rangle\, dt>0.
\end{eqnarray*}
Again, we conclude $q_1\not=q_2$. Thus   $\pi:X^*(B)\rightarrow\r^2$   is injective, hence $X^*(B)$ is a graph.
 
\section{Third proof: using duality}\label{sec5}

Between minimal surfaces and maximal surfaces there is a correspondence, called duality, that assigns to each  minimal surface in $\e^3$   a maximal  surface in $\l^3$  and {\it vice-versa} (see \cite{le2,lm} for generalizations  in other  ambient spaces). It was Calabi the first who realized of this correspondence  when the surfaces are expressed as graphs on simply connected domains  (\cite{ca}). Indeed, assume that  $S$ is   a minimal graph in $\e^3$ of a function $f:\Omega\subset\r^2\rightarrow\r$ defined in a simply connected domain $\Omega$. Then the minimality of $S$ is equivalent to 
$$\mbox{div}\left(\frac{Df}{\sqrt{1+|Df|^2}}\right)=0.$$
Since $\Omega$ is simply connected, there exists a solution $f^\flat:\Omega \rightarrow\r$ of the equation
$$Df^\flat=\frac{(-f_y,f_x)}{\sqrt{1+|Df|^2}}.$$
Moreover, $f^\flat$ satisfies  $|Df^\flat|<1$  in $\Omega$, that is, the graph $S^\flat$ of $f^\flat$, viewed in the Lorentz-Minkowski space $\l^3$, is spacelike. On the other hand, it is immediate that 
$$\mbox{div}\left(\frac{Df^\flat}{\sqrt{1-|Df^\flat|^2}}\right)=0,$$
which is equivalent to say  that the mean curvature of $S^\flat$ vanishes identically. Thus $S^\flat$   is a maximal surface which   is called the  {\it dual surface} of $S$. We say that this duality is by graphs because the arguments are local. It is immediate a reverse process starting from a maximal graph $S$ in $\l^3$   and  obtaining a minimal surface   $S^\sharp$ in $\e^3$, called the {\it dual surface} of $S$.

On the other hand,   a similar correspondence appeared in \cite{gu,lls}, where  the duality is    defined in terms of  the isotropic curve that determines the surface. Exactly, let $X:\Omega\subset\c \rightarrow\e^3$, $X=X(z)$, be a conformal minimal surface defined on a simply connected domain $\Omega$ and let   $\Phi:\Omega\rightarrow\c^3$ be the isotropic curve such that  
 $\Phi(z)=2X_zdz=(\Phi_1,\Phi_2,\Phi_3)$. Then  $\langle\Phi,\Phi\rangle_0=0$,   where $\langle,\rangle_0$ is the complexification in $\c^3$ of the Euclidean metric.  We consider the $1$-form in $\c^3$ defined by $\Psi=(\Phi_1,\Phi_2,-i\Phi_3)$. It is immediate that   $\langle\Psi,\Psi\rangle=0$. The {\it dual surface} of $X$ is   the maximal surface   $X^\flat:\Omega\rightarrow\l^3$  defined by $X^\flat(z)=\RE\int^z\Psi(z)$. The converse process is similar. If $X:\Omega \rightarrow\l^3$ is a conformal maximal surface and $\Psi=(\Psi_1,\Psi_2,\Psi_3)$ is its isotropic curve, then  $\Phi= (\Psi_1,\Psi_2,i\Psi_3)$  is the isotropic curve of a minimal surface       in  $\e^3$ by means of $X^\sharp(z)=\RE\int^z \Phi(z)$, which is called the {\it dual surface} of $X$.  Furthermore, and up to   translations of the ambient space,  we have $M=(M^\sharp)^\flat$.  If $\mbox{Min}$ and $\mbox{Max}$ denote the family of minimal surfaces of $\e^3$ and the  maximal surfaces of $\l^3$, respectively, the duality is given by the two maps
$$\flat:\mbox{Min}\rightarrow\mbox{Max},\quad\quad\sharp:\mbox{Max}\rightarrow\mbox{Min}$$ 
with the property that $\flat\circ\sharp$ and $\sharp \circ\flat$ are the identities in $\mbox{Max}$ and $\mbox{Min}$ respectively.

It is important to remark that   both definitions of duality coincide. This was proved by  Lee    and the   key of this equivalence is the existence of a simultaneous conformal coordinates for a minimal graph and   its dual maximal graph.

\begin{proposition}[\cite{le}]\label{pr2}
Up to a translation, the dual surface of a minimal (resp. maximal) graph over a simply connected domain of $\r^2$ coincides with the dual surface obtained by the correspondence $\flat$ (resp. $\sharp$).
\end{proposition}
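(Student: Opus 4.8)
The plan is to exhibit a single conformal parameter serving at once the minimal graph and its graph-dual, and then to check that the isotropic-curve construction reproduces precisely the function $f^\flat$ entering the definition of the graph-dual. I describe the minimal-to-maximal case; the other case is symmetric, exchanging $\Phi$ and $\Psi$ and the two complexified scalar products.

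Let $S$ be the minimal graph of $f:\Omega\to\r$ over a simply connected domain $\Omega\subset\r^2$, with induced metric $g=(1+f_x^2)\,dx^2+2f_xf_y\,dx\,dy+(1+f_y^2)\,dy^2$. Since $\Omega$ is simply connected, $g$ admits a global isothermal coordinate $z=u+iv$, so that $X(z)=(x(z),y(z),f(x(z),y(z)))$ is a conformal, hence minimal, immersion. This is the meaning of ``simultaneous conformal coordinates'': the graph-dual $S^\flat$ --- the graph of $f^\flat$ with $Df^\flat=(-f_y,f_x)/\sqrt{1+|Df|^2}$ --- will be written over this very parametrization, and the whole statement reduces to comparing two functions on the connected set $\Omega$.

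I would then compute the isotropic curve $\Phi=2X_z\,dz=(\Phi_1,\Phi_2,\Phi_3)$. As $x$, $y$ and $X_3=f(x(z),y(z))$ are real-valued, $\RE\Phi_1=dx$, $\RE\Phi_2=dy$, $\RE\Phi_3=df=f_x\,dx+f_y\,dy$, and each holomorphic component satisfies $\IM\Phi_j=-(\RE\Phi_j)\circ J$, where $J$ is the rotation by $90^\circ$ of the induced metric $g$ --- precisely the $J$ used to define the conjugate surface, since $\Phi$ comes from the conformal parameter of $X$. Now $\Psi=(\Phi_1,\Phi_2,-i\Phi_3)$ and $X^\flat=\RE\int\Psi$ (path-independent because $\Omega$ is simply connected); its first two coordinates are again $x(z)$ and $y(z)$, so $X^\flat$ is a graph over $\Omega$ of a function $\hat f$, whose differential is $d\hat f=\RE(-i\Phi_3)=\IM\Phi_3=-df\circ J$.

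It remains to identify $-df\circ J$. Writing $g=E\,dx^2+2F\,dx\,dy+G\,dy^2$ with $E=1+f_x^2$, $F=f_xf_y$, $G=1+f_y^2$ and $W=\sqrt{EG-F^2}=\sqrt{1+|Df|^2}$, one gets $dx\circ J=-(F\,dx+G\,dy)/W$ and $dy\circ J=(E\,dx+F\,dy)/W$, hence
$$d\hat f=-(f_x\,dx+f_y\,dy)\circ J=\frac{(f_xF-f_yE)\,dx+(f_xG-f_yF)\,dy}{W}=\frac{-f_y\,dx+f_x\,dy}{\sqrt{1+|Df|^2}}=Df^\flat .$$
Since $\Omega$ is connected, $\hat f-f^\flat$ is constant, so $X^\flat$ is the graph $S^\flat$ up to a vertical translation, as desired. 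The step I expect to be the main obstacle is the very first one: ensuring that the induced metric on a minimal (resp. maximal) graph over a simply connected domain carries a \emph{global} isothermal chart on a simply connected domain, so that the two duals live over a common parametrization; granting this, the rest is the bookkeeping with $J$ above, and the converse direction only flips a sign in the third slot of the isotropic curve, swaps the Euclidean for the Lorentzian complexified metric, and uses $|Df^\flat|<1$ (so that there $W=\sqrt{1-|Df^\flat|^2}$) to remain within spacelike graphs.
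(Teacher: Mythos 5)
Your argument is correct, and it is worth noting first that the paper does not prove Proposition \ref{pr2} at all: it is quoted from Lee \cite{le}, with only the remark that the key point is the existence of simultaneous conformal coordinates for a minimal graph and its dual maximal graph. Your proof is a self-contained verification along exactly that line, and in fact the ``simultaneity'' comes out for free rather than being assumed: uniformization of the simply connected graph gives a global conformal parameter $z$ (so the step you flag as the main obstacle is standard and not a real one), the same $z$ is automatically conformal for the $\flat$-image because $\Psi=(\Phi_1,\Phi_2,-i\Phi_3)$ is isotropic for the complexified Lorentzian product, the first two components of $\Psi$ reproduce the horizontal coordinates, so $X^\flat$ is again a graph over $\Omega$, and your computation $d\hat f=\RE(-i\Phi_3)=\IM\Phi_3=-df\circ J=(-f_y\,dx+f_x\,dy)/\sqrt{1+|Df|^2}$ identifies its height function with $f^\flat$ up to an additive constant; the maximal-to-minimal direction is indeed symmetric, with $|Df|<1$ giving $W=\sqrt{1-|Df|^2}$. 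The one point you should make explicit is the orientation convention: the identities $dx\circ J=-(F\,dx+G\,dy)/W$ and $dy\circ J=(E\,dx+F\,dy)/W$ presuppose that the isothermal parametrization is orientation-preserving with respect to $dx\wedge dy$ on $\Omega$, so that the parameter rotation and the metric rotation agree; with the opposite choice one gets $\hat f=-f^\flat+c$, whose graph is a reflection, not a translation, of $S^\flat$. Since one may always replace $z$ by $\bar z$, this is a one-line fix, but the literal conclusion ``up to a translation'' depends on it and it should be stated.
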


Other ingredient that we need in the proof of Theorem \ref{t2} is   the behavior of duality by conjugations of  minimal surfaces and maximal surfaces.

\begin{lemma}\label{l1}
 Up to translations of the ambient space, the duality   and the conjugation processes are commutative, that is, $(M^\flat)^*=(M^*)^\flat$ and $(M^\sharp)^*=(M^*)^\sharp$.
\end{lemma}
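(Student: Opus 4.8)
The plan is to run all three operations involved — conjugation, the duality map $\flat:\mathrm{Min}\to\mathrm{Max}$, and the duality map $\sharp:\mathrm{Max}\to\mathrm{Min}$ — at the level of isotropic curves, where each of them is a diagonal $\c$-linear map of $\c^3$ with respect to the fixed splitting into the first two coordinates and the third one; since diagonal maps commute, so do the operations, and a surface on a simply connected domain is recovered from its isotropic curve by $\RE\int^z$ up to an additive constant, which is exactly the translation allowed in the statement. So throughout I would fix a simply connected domain $\Omega$ (no period obstruction arises there) and keep track only of isotropic curves.

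First I would record the three transformation rules. For a conformal minimal surface $X$ in $\e^3$ with isotropic curve $\Phi=(\Phi_1,\Phi_2,\Phi_3)$, the conjugate $X^*$ has isotropic curve $-i\,\Phi$: this is the verbatim $\e^3$-analogue of the computation \eqref{pp}, using $X^*_u=-X_v$, $X^*_v=X_u$ and replacing $\langle,\rangle$ by $\langle,\rangle_0$; and $\Psi^*=-i\,\Psi$ for a maximal surface in $\l^3$ is \eqref{pp} itself. By the definitions recalled above, $\flat$ sends $(\Phi_1,\Phi_2,\Phi_3)$ to $(\Phi_1,\Phi_2,-i\,\Phi_3)$ and $\sharp$ sends $(\Psi_1,\Psi_2,\Psi_3)$ to $(\Psi_1,\Psi_2,i\,\Psi_3)$. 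In matrices, conjugation is $-i\,I$ while $\flat$ and $\sharp$ are $\mathrm{diag}(1,1,-i)$ and $\mathrm{diag}(1,1,i)$.

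Then I would simply compose. For $M$ minimal in $\e^3$ with isotropic curve $\Phi=(\Phi_1,\Phi_2,\Phi_3)$: applying $\flat$ then conjugation gives the isotropic curve $-i\,(\Phi_1,\Phi_2,-i\,\Phi_3)=(-i\,\Phi_1,-i\,\Phi_2,-\Phi_3)$ for $(M^\flat)^*$, while applying conjugation then $\flat$ gives, starting from $-i\,\Phi=(-i\,\Phi_1,-i\,\Phi_2,-i\,\Phi_3)$ and multiplying the third entry by $-i$, again $(-i\,\Phi_1,-i\,\Phi_2,-\Phi_3)$ for $(M^*)^\flat$; hence $(M^\flat)^*=(M^*)^\flat$ up to a translation. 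The statement for $\sharp$ is the mirror image: for $M$ maximal in $\l^3$ with isotropic curve $\Psi=(\Psi_1,\Psi_2,\Psi_3)$, both $(M^\sharp)^*$ and $(M^*)^\sharp$ are found to have isotropic curve $(-i\,\Psi_1,-i\,\Psi_2,\Psi_3)$, so they coincide up to a translation.

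The content is essentially bookkeeping, so I do not expect a genuine obstacle; the one place where care is needed is the very first step, namely verifying that conjugation acts on the isotropic curve as the single scalar $-i$ in \emph{every} component (the $\e^3$-analogue of \eqref{pp}) while $\flat$ and $\sharp$ act only on the third component — this is what makes all three operations simultaneously diagonal. Once that is in place, commutativity is automatic and the two surfaces on each side of the asserted identity share the same isotropic curve, hence agree up to the translations allowed in the statement.
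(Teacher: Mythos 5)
Your proposal is correct and follows essentially the same route as the paper: both arguments work purely at the level of isotropic curves, using $\Psi^*=-i\,\Psi$ (formula \eqref{pp} and its Euclidean analogue) together with the fact that $\flat$ and $\sharp$ only multiply the third component by $\mp i$, and then check that both compositions yield the same curve, so the surfaces agree up to translation. The only difference is cosmetic: the paper writes out the $\flat$ identity and declares the $\sharp$ case analogous, whereas you compute both explicitly.
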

\begin{proof} We only prove the first identity because the other one is analogous. Let $X:M\rightarrow\e^3$ be a minimal surface and let $\Phi=(\Phi_1,\Phi_2,\Phi_3)$ be its isotropic curve. The isotropic curve of $X^\flat$ is $(\Phi_1,\Phi_2,-i\Phi_3)$. By \eqref{pp},  the isotropic curve of  $(X^\flat)^*$ is $-i(\Phi_1,\Phi_2,-i\Phi_3)=(-i\Phi_1,-i\Phi_2,-\Phi_3)$.

On the other hand, we know that the isotropic curve $\Phi^*$ of   $X^*:M\rightarrow\e^3$ is 
$\Phi^*=-i\Phi=- i(\Phi_1,\Phi_2,\Phi_3)=(-i\Phi_1,-i\Phi_2,-i\Phi_3)$. Then the isotropic curve of $(X^*)^\flat$ is  $(-i\Phi_1,-i\Phi_2,-\Phi_3)$, which coincides with the one of $(X^\flat)^*$, proving the result.\qed
\end{proof}

We now present the third proof of Theorem \ref{t2}. By using the dual correspondence, we carry the proof   in a problem in Euclidean space $\e^3$. Then we  use   Theorem \ref{t1}, and finally we come back to $\l^3$ by duality again. A similar idea was used by   Al\'{\i}as and Palmer   to prove the equivalence of the Bernstein theorem in $\e^3$ and  $\l^3$ (\cite{ap}). 

Let $X:B\rightarrow\l^3$ be a maximal graph on a convex domain $\Omega$ of a spacelike plane $P$. After a rigid motion, we assume   that $P$ is the $x_1x_2$-plane, which we identify   with $\r^2$. Let $X^\sharp:B\rightarrow\e^3$ be its dual surface (as a minimal graph). Since   $\Omega$ is simply connected, we know by Proposition \ref{pr2} that $X^\sharp(B)$  is a minimal graph on the same domain $\Omega$. Because $\Omega$ is convex,  the (Euclidean) Krust's Theorem \ref{t1} asserts that the dual surface $(X^\sharp)^*:B\rightarrow\e^3$ is a minimal graph on some domain $\widetilde{\Omega}\subset\r^2$. Because $(X^\sharp)^*$ is an embedding and $B$ is the unit ball, the domain $\widetilde{\Omega}$ is simply connected. Using Proposition \ref{pr2} again, the dual surface of  $(X^\sharp)^*$, namely,   $((X^\sharp)^*)^\flat$, is a maximal graph on the same domain $\widetilde{\Omega}$. Finally, by Lemma \ref{l1}, 
$$((X^\sharp)^*)^\flat=((X^\sharp)^\flat)^*=X^*,$$
proving that $X^*$ is a maximal graph. This  concludes the proof.


\begin{thebibliography}{99}
%
%

\bibitem{ap} Al\'{\i}as, J. L.,  Palmer, B.: A duality result between the minimal surface equation and the maximal surface equation. An. Acad. Bras. Cienc. \textbf{73}, 161--164  (2001)
 
 \bibitem{bo}  Bombieri, E., De Giorgi, E., Giusti, E.:
Minimal cones and the Bernstein problem. Invent. Math. \textbf{7}, 243--268 (1969)

\bibitem{ca}  Calabi, E.:  Examples of Bernstein problems for some nonlinear equations. Proc. Symp. Pure Math. \textbf{15}, 223--230  (1970)

\bibitem{cy} Cheng, S. Y., Yau, S. T.:
Maximal space-like hypersurfaces in the Lorentz-Minkowski spaces.
Ann. of Math. (2) \textbf{104},  407--419  (1976)


\bibitem{dhkw}
 Dierkes, U., Hildebrandt, S.,  Ster, A. K.,   Wohlrab, O.: Minimal Surfaces I: Boundary Value Problems. Grundlehren der Mathematischen Wissenschaften, Springer, 1992. 

\bibitem{gu} Gu, C. H.: The extremal surfaces in the 3-dimensional Minkowski space. Acta Math. Sin. New Ser. \textbf{1}, 173--180  (1985)




\bibitem{ka}
Karcher, H.: Construction of minimal surfaces. Surveys in Geometry, 1--96, University of Tokyo, 1989 (also: Lecture Notes No. 12, SFB256, Bonn, 1989. Available: http://www.math.uni-bonn.de/people/karcher/karcherTokyo.pdf).  

\bibitem{ko}   Kobayashi, O.: Maximal surfaces in the 3-dimensional Minkowski space $L^3$. Tokyo J. Math.  \textbf{6}, 297--309  (1983)

\bibitem{le}  Lee, H.: Extension of the duality between minimal surfaces and maximal surfaces. Geom. Dedicata, \textbf{151},  373--386  (2011)


\bibitem{le2} Lee, H.: 
Minimal surface systems, maximal surface systems and special Lagrangian equations.  
Trans. Amer. Math. Soc.  \textbf{365},  3775--3797  (2013)

\bibitem{lm} Lee, H., Manzano, J. M.: Generalized Calabi correspondence and complete spacelike surfaces. Asian J. Math.    \textbf{23}, 35--48 (2019)

\bibitem{lo5} L\'opez, R.:  Differential geometry of curves and surfaces in Lorentz-Minkowski space.  Int. Electron. J. Geom.    \textbf{7}, 44--107  (2014)

\bibitem{ls} L\'opez, R., Kaya, S.: 
On the duality between rotational minimal surfaces and maximal surfaces.
J. Math. Anal. Appl. \textbf{458}, 345--360  (2018)

 \bibitem{lls} L\'opez, F. J., L\'opez, R.,  Souam, R.:
Maximal surfaces of Riemann type in Lorentz-Minkowski space $L^3$.
Michigan Math. J. \textbf{47},  469--497  (2000)

\bibitem{ma} Mart\'{\i}n, F.,   Reyes, C.:  
An extension of Krust's theorem for minimal surfaces.   
In: Florentino Garc\'{\i}a Santos: in memoriam, 115--117, Ed. Univ. Granada, Granada, 2011. 

\bibitem{ra}   Rad\'o, T.: Contributions to the theory of minimal surfaces. 
Acta Litt. Scient. Univ. Szeged, \textbf{6}, 1--20  (1932)
\end{thebibliography}


\end{document}